\newtheorem{theorem}{Theorem}[section]
\newtheorem{proposition}[theorem]{Proposition}
\newtheorem{lemma}[theorem]{Lemma}
\newtheorem{corollary}[theorem]{Corollary}
\newtheorem{remark}[theorem]{Remark}
\makeatletter \@addtoreset{equation}{section} \makeatother
\newcommand{\beq}{\begin{equation}}
\newcommand{\eeq}{\end{equation}}
\newcommand{\Rmnum}[1]{\expandafter\@slowromancap\romannumeral #1@}
\begin{document}

\title[3D incompressible Oldroyd-B model]
{Global small solutions of 3D incompressible Oldroyd-B model without damping mechanism}
\author[Y. Zhu]{Yi Zhu}
\address{Department of Mathematics, East China University of Science and Technology, Shanghai 200237, People's Republic of China}
\email{\tt zhuyim@ecust.edu.cn}

\date{}
\subjclass[2010]{76A05, 76D03}
\keywords{ Oldroyd-B Model, Global Classical Solutions, Non-Newtonian Flow.}

\begin{abstract}
In this paper, we prove the global existence of small smooth solutions to the three-dimensional incompressible Oldroyd-B model without damping on the stress tensor. The main difficulty is the lack of full dissipation in stress tensor. To overcome it, we construct some time-weighted energies based on the special coupled structure of system. Such type energies show the partial dissipation of stress tensor and the strongly full dissipation of velocity.
In the view of treating ``nonlinear term'' as a ``linear term'', we also apply this result to 3D  incompressible viscoelastic system with Hookean elasticity and then prove the global existence of small solutions without the physical assumption (div-curl structure) as previous works.
\end{abstract}

% With AMS-LaTeX, \maketitle follows the abstract
\maketitle

\section{introduction}

The Oldroyd-B model describes the motion of some viscoelastic flows, for example, the system coupling fluids and polymers. It presents a typical constitutive law which does not obey the Newtonian law (a linear relationship between stress and the gradient of velocity
in fluids). Such non-Newtonian property may arise from the memorability of some fluids. Formulations about viscoelastic flows of Oldroyd-B type are first introduced by Oldroyd \cite{Oldroyd} and are extensively discussed in \cite{BCAH}.

The 3D incompressible Oldroyd-B model  can be written as follows

\begin{equation}\label{eq:1.1}
\begin{cases}
u_t + u\cdot \nabla u -  \mu \Delta u + \nabla p =  \mu_1 \nabla \cdot \tau,  \quad\quad  (t, x) \in \mathbb{R}^+ \times \mathbb{R}^3,\\
\tau_t + u\cdot \nabla \tau  + a \tau + Q(\tau, \nabla u) =  \mu_2 D (u),\\
\nabla \cdot u = 0,
\end{cases}
\end{equation}
with initial data
\begin{equation}\nonumber
u(0,x) = u_0(x), \quad \tau(0,x) = \tau_0(x), \quad x \in \mathbb{R}^3.
\end{equation}
Here $u=(u_1,u_2, u_3)^{\top}$ denotes the velocity, $p$ is the scalar pressure of fluid. $\tau$ is the non-Newtonian part of stress tensor which can be seen as a symmetric matrix here. $D(u)$ is the symmetric part of $\nabla u$,
\begin{equation}\label{du}
D(u) = \frac{1}{2} \big( \nabla u + (\nabla u)^{\top} \big),
\end{equation}
and $ Q$ is a given bilinear form which can be chosen as
\begin{equation}\label{defnQ}
Q(\tau, \nabla u)= \tau \Omega(u) - \Omega(u) \tau + b(D(u) \tau + \tau D(u)),
\end{equation}
where $\Omega(u)$ is the skew-symmetric part of $\nabla u$, namely
\begin{equation}\label{omegau}
\Omega(u) = \frac{1}{2} \big( \nabla u - (\nabla u)^{\top} \big).
\end{equation}
The coefficients $\mu, a,  \mu_1, \mu_2$ are assumed to be non-negative constants. When $a=0$, the system becomes Oldroyd-B model without damping which is concerned in this paper. $b \in [-1, 1]$ is a parameter and if $b=0$, we call the system corotational case.

For a self-contained presentation, we shall give a  brief derivation of system \eqref{eq:1.1}. Following \cite{CM}, the differential form of momentum conservation for homogenous and incompressible fluid can be written as
\begin{equation}\nonumber
\partial_t u + u\cdot \nabla u  = \nabla \cdot \sigma,
\end{equation}
 with the associated incompressible condition $\nabla \cdot u = 0$. The stress tensor $\sigma$ is  usually  written as $$\sigma = - p I + \tau_{total}.$$
 $\tau_{total}$ contains viscosity and other stresses.  In classical elastic
solid, the stress tensor  depends on the deformation. And for classical
viscous fluid, the stress tensor  depends on the rate of  deformation.
When we concern with Oldroyd-B model, the constitutive law is selected by
\begin{equation}\nonumber
\tau_{total} + \lambda_1 \frac{\mathcal{D}\tau_{total}}{\mathcal{D}t}   = 2\eta \big ( D(u) + \lambda_2   \frac{\mathcal{D} D(u)}{\mathcal{D}t}  \big),
\end{equation}
where, for any tensor $f(x,t)$, we have
\begin{equation}\nonumber
\frac{\mathcal{D}f}{\mathcal{D}t} = \partial_t f+ u\cdot \nabla f  + Q(f, \nabla u).
\end{equation}
Here $\lambda_1$ denotes the relaxation time and $\lambda_2$ the retardation time with $0\leq \lambda_2 \leq \lambda_1$. We can decompose $\tau_{total}$ into two parts: Newtonian part and elastic part, i.e., $\tau_{total} = \mathcal{N} + \tau$. We know that  $\mathcal{N} = 2 \eta \lambda_2 /\lambda_1 D(u)$ and thus $\tau$ satisfies the second equation of system \eqref{eq:1.1}  with
$$ a = \frac{1}{\lambda_1}, \quad \mu_2 = \frac{2\eta}{\lambda_1}(1-\frac{\lambda_2}{\lambda_1}), \quad \mu_1 = 1. $$
For more detailed derivation, we refer to \cite{Oldroyd, BCAH, CM}.

As one of the most popular constitutive laws, Oldroyd-B model of viscoelastic fluids has attracted many attentions and lots of excellent works have been done. Naturally, for a partial differential system, one key consideration is to derive the local and global existence of solutions. Achieved by Guillop\'{e} and Saut \cite{GS, GS2}, the local strong solutions exist and are unique. They also show these solutions are global provided that the coupling parameter and  initial data are small enough.
Other than Hilbert spaces $H^s$ considered in \cite{GS, GS2}, the result under $L^s-L^r$ framework is studied in \cite{FGO}.
In corotational case ($b = 0$), based on the basic energy equality, Lions and Masmoudi \cite{LM} proved the global existence of weak solutions. However, the case $b \neq 0$ is still not clear by now.
The theories of local solutions and global small solutions in (or near) critical Besov spaces were first studied by Chemin and Masmoudi \cite{CM}. Some delicate blow-up criterions were also shown in \cite{CM}.
And in \cite{LMZ}, Lei, Masmoudi and Zhou improved the criterion.
For more global existence results in generalized spaces, we refer to \cite{CM2, ZFZ}.
Further more, global well-posedness with a class of large initial data was given by Fang and Zi \cite{FZ}.
We should point out here the above results for global smooth solutions always require $a > 0$ (namely the system with damping) at least for non-trivial initial data.

In the case $\mu = 0$ and the equation of $\tau$ contains viscous term $-\Delta \tau$,  Elgindi and Rousset \cite{ER} proved the global existence of smooth solutions with small initial data in 2D. When $Q = 0$, they also derived the similar result with general data.
The key idea of \cite{ER} is to study a new quantity $\Gamma = \omega - \mathcal{R}\tau$, where $ \omega = \text{curl} \;u$ and $ \mathcal{R} = \Delta^{-1}\text{curl div}.$ For the 3D case, the small initial data result was obtained by Elgindi and Liu \cite{EL}.
Recently, applying direct energy method based on the coupling structure of system,  an improvement without the damping term ($a = 0$) was given by the author \cite{Zhu}.

Besides, we would like to mention that global
regularity of solutions to 2D Oldroyd-B model with diffusive stress  was obtained by Constantin and Kliegl \cite{CK}. And some interesting results for related Oldroyd  type models of viscoelastic fluids can be found in \cite{LinLZ, LZ, LeiLZ, LeiLZ2, LinZ, CZ, QZ, ZF}. We shall review these results later in Section 2.

Now, let us give the main result of this paper. We focus on the Oldroyd-B model in the case $a = 0$ (without damping term). More precisely, we prove the following theorem.

\begin{theorem}\label{thm}
Let $\mu,  \mu_1, \mu_2 >0$ and $ a = 0$. Suppose that $\nabla \cdot u = 0, (\tau_0)_{ij} =( \tau_0)_{ji}$ and initial data $|\nabla|^{-1}u_0, |\nabla|^{-1} \tau_0 \in H^3(\mathbb{R}^3)$.  Then there exists a small constant
$\varepsilon$ such that system \eqref{eq:1.1} admits a unique global classical solution provided that
$$ \||\nabla|^{-1}u_0\|_{H^3} + \||\nabla|^{-1}\tau_0\|_{H^3} \leq \varepsilon, $$
where $|\nabla| = (-\Delta)^\frac{1}{2}$.
\end{theorem}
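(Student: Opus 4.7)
The plan is to prove Theorem~\ref{thm} by a continuity/bootstrap argument. Local well-posedness of smooth solutions in this regularity class is essentially standard for the coupled parabolic--transport system \eqref{eq:1.1} (cf.\ \cite{CM}, adapted to include the $|\nabla|^{-1}$ low-frequency norm in the iteration space), so the heart of the matter is a closed a priori estimate. I therefore fix $T>0$ on which a smooth solution already exists and show that if the initial data satisfies $\||\nabla|^{-1}(u_0,\tau_0)\|_{H^3}\le\varepsilon$, then a suitably designed energy remains of order $\varepsilon^2$ throughout $[0,T]$, uniformly in $T$.

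The first step is the basic energy identity. Apply $\nabla^k$ to both equations, pair with $\mu_2\nabla^k u$ and $\mu_1\nabla^k\tau$ respectively, and sum over $k=-1,0,\dots,3$, with the convention $\nabla^{-1}=|\nabla|^{-1}$. Because $\tau$ is symmetric and $u$ is divergence-free, the linear coupling $\mu_1\mu_2\bigl(\langle\nabla^k\nabla\!\cdot\!\tau,\nabla^k u\rangle+\langle\nabla^k D(u),\nabla^k\tau\rangle\bigr)$ cancels, producing
\[
\frac{d}{dt}\mathcal E_0+\mu\mu_2\sum_{k=-1}^{3}\|\nabla^{k+1}u\|_{L^2}^2=\mathcal N_0,
\]
where $\mathcal E_0=\sum_{k=-1}^{3}(\mu_2\|\nabla^k u\|_{L^2}^2+\mu_1\|\nabla^k\tau\|_{L^2}^2)$ and $\mathcal N_0$ is trilinear. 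This gives full dissipation of $u$ but none of $\tau$, and the $k=-1$ level is precisely where the $|\nabla|^{-1}$ hypothesis on the data is used.

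The crux is the second step: manufacturing partial dissipation of $\tau$ from the coupling. I would test the $u$-equation against $\mathcal P(\nabla\!\cdot\!\tau)$ (with $\mathcal P$ the Leray projector) and substitute $\mu_2 D(u)$ from the $\tau$-equation; since $\nabla\!\cdot\!D(u)=\tfrac12\Delta u$ for divergence-free $u$, after integration by parts one obtains, at each derivative level $k\le 2$,
\[
\frac{d}{dt}\langle\nabla^k u,\nabla^k\mathcal P(\nabla\!\cdot\!\tau)\rangle+\mu_1\|\nabla^k\mathcal P(\nabla\!\cdot\!\tau)\|_{L^2}^2\lesssim\|\nabla^{k+1}u\|_{L^2}^2+\|\nabla^{k+2}u\|_{L^2}^2+\mathcal N_1^{(k)}.
\]
Summing and integrating in $t$, the boundary cross term is absorbed into $\mathcal E_0$, yielding a dissipation functional $\mathcal D(t)=\sum_{k=-1}^{3}\|\nabla^{k+1}u\|_{L^2}^2+\sum_{k=0}^{2}\|\nabla^{k+1}\tau\|_{L^2}^2$ in a Lyapunov inequality $\tfrac{d}{dt}\widetilde{\mathcal E}+\mathcal D\lesssim\mathcal N_0+\mathcal N_1$.

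The final step handles the nonlinearity. The advection $u\!\cdot\!\nabla\tau$ and the bilinear $Q(\tau,\nabla u)$ at top order produce terms that cannot be absorbed by $\mathcal D$ alone, because $\tau$ itself (as opposed to $\nabla\tau$) is not dissipated. To close them I would introduce time-weighted quantities $(1+t)^{\alpha_k}\|\nabla^k(u,\tau)\|_{L^2}^2$ together with their weighted-in-time dissipation, and exploit the enhanced decay of $u$ inherited from the $|\nabla|^{-1}$ hypothesis: viewing the projected velocity equation as a heat equation with the $\mathcal P(\nabla\!\cdot\!\tau)$ coupling as a perturbation yields $\|u(t)\|_{L^2}\lesssim\varepsilon(1+t)^{-3/4}$ and faster rates for higher derivatives. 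The weight-derivative terms produced by differentiating $(1+t)^{\alpha_k}$ are then dominated by lower-weight pieces of $\widetilde{\mathcal E}$, and the bad top-order nonlinear terms become integrable in time. A single Gronwall argument combining the basic, partial-dissipation, and weighted estimates closes the bootstrap. The main obstacle is exactly this closure: without any $\tau$-damping one only has partial dissipation of $\tau$, and beating the top-order coupling requires a delicate balance between the weighted velocity decay and the bilinear structure -- possible precisely because the $|\nabla|^{-1}$ data assumption pins down the $t^{-3/4}$ rate that everything else is measured against.
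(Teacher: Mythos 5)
Your overall architecture (basic energy with cancellation of the linear coupling, a cross term $\langle u,\mathbb{P}\nabla\cdot\tau\rangle$ to extract dissipation from the coupling, time weights, bootstrap) is the same as the paper's, but two of your claims are wrong in ways that hide the actual difficulty. First, the cross-term trick does \emph{not} yield the dissipation functional you write down: testing the $u$-equation against $\nabla^k\mathbb{P}(\nabla\cdot\tau)$ and substituting $\nabla\cdot\tau_t=\tfrac12\Delta u-\dots$ produces only $\|\nabla^k\mathbb{P}\nabla\cdot\tau\|_{L^2}^2$, not $\|\nabla^{k+1}\tau\|_{L^2}^2$; the component $\Delta^{-1}\nabla\,\mathrm{div}\,(\nabla\cdot\tau)$ and the whole ``non-divergence'' part of $\tau$ receive no dissipation at all. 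The entire point of the paper is that one must close the scheme using \emph{only} $\mathbb{P}\nabla\cdot\tau$ in the dissipation, and this forces a specific design: every nonlinear term that is tested against $\mathbb{P}\nabla\cdot\tau$ must either carry a factor $\nabla u$ (which is time-integrable thanks to the $(1+t)^2$-weighted dissipation of $\|\nabla^2u\|_{H^1}^2$) or be removable by integration by parts. For the worst term $\mathbb{P}\nabla\cdot(u\cdot\nabla\tau)$ this requires the commutator identity $\mathbb{P}\nabla\cdot(u\cdot\nabla\tau)=\mathbb{P}(u\cdot\nabla\,\mathbb{P}\nabla\cdot\tau)+\mathbb{P}(\nabla u\cdot\nabla\tau)-\mathbb{P}(\nabla u\cdot\nabla\Delta^{-1}\nabla\cdot\nabla\cdot\tau)$ (Proposition 3.1 of the paper); without it the term $\|u\|_{L^\infty}\|\nabla^2\tau\|_{L^2}$ appears, and since $\nabla^2\tau$ has neither decay nor dissipation this does not close. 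Your outline omits this step entirely.

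Second, the decay rate you anchor the bookkeeping to is not available. With $|\nabla|^{-1}u_0\in L^2$ the heat semigroup gives $\|u(t)\|_{L^2}\lesssim(1+t)^{-1/2}$, not $(1+t)^{-3/4}$ (the latter needs $L^1$-type data); the paper's weights encode exactly $(1+t)^{-1/2}$ for $\|u\|_{H^2}$ and $(1+t)^{-1}$ for $\|\nabla u\|_{H^1}$ and $\|\mathbb{P}\nabla\cdot\tau\|_{H^1}$. Moreover, deriving decay of $u$ by ``viewing the projected velocity equation as a heat equation with $\mathbb{P}\nabla\cdot\tau$ as a perturbation'' is circular: that forcing is a linear term of the same order, and its Duhamel contribution decays only if one already knows decay of $\tau$. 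The paper instead estimates $u$ and $\mathbb{P}\nabla\cdot\tau$ \emph{together} in each weighted energy (note the coefficient $2$ in front of $\|\mathbb{P}\nabla\cdot\tau\|_{H^1}^2$ in $\mathcal{E}_2$, chosen so the linear cross terms cancel exactly), and closes via the interpolation $\mathcal{E}_1\lesssim\mathcal{E}_0^{1/2}\mathcal{E}_2^{1/2}$ between the unweighted and the $(1+t)^2$-weighted levels. As written, your proposal does not survive the two places where the absence of damping actually bites.
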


\begin{remark}
The assumption of initial data in negative order Sobolev space $\dot H^{-1}$ could be removed by considering fractional order time-weighted energies in the energy framework \eqref{energy2}. To best illustrate our idea and make paper neat, here we wouldn't like to touch fractional order energies.
%In this paper, we are interested in the 3D case. For 2D case, we believe that similar method is also useful. But, in 2D case, you may have to deal with fractional energy and the related commutators. To make it more complete, similar result in 2D case will be considered in our future work.
\end{remark}

The key point in proving theorem \ref{thm} is to obtain $L^1$ estimate of $\|\nabla u(t, \cdot)\|_{L^\infty_x}$ in time as well as some higher order norms of $u$.
It helps preserve the regularity of solutions from initial data.
However, the lack of full dissipation in stress tensor $\tau$ brings the main difficulty.
We analyse the following linearized system first (without loss of generality, set $\mu = \mu_1 = \mu_2 = 1$)
\begin{equation}\nonumber
\begin{cases}
u_t - \Delta u = \mathbb{P} \nabla \cdot \tau, \\
\tau_t = D(u).
\end{cases}
\end{equation}
 Here, $\mathbb{P}$ is the projection operator used to deal with  pressure.
 Notice the fact that
 $$ \mathbb{P} \nabla \cdot D(u) = \frac{1}{2}\Delta u ,$$
we can decouple the linearized system and find that both $u$ and $\mathbb{P} \nabla \cdot \tau$ satisfy the following damped wave equation
\begin{equation*}\label{lin}
W_{tt} - \Delta W_t - \frac{1}{2}\Delta W = 0.
\end{equation*}
It seems that we can gain enough decay at least in the linearized system \eqref{lin}.
In fact, we only expect the partial dissipation in $\tau$, namely $\mathbb{P} \nabla \cdot \tau$.
Based on the special dissipative mechanism of system \eqref{eq:1.1} we set two type energies (see \eqref{energy1} and \eqref{energy2}).
To enclose the energy with only partial dissipation of $\tau$, we make full use of the structure of system (some cancelations on linear terms) and the fact that time derivative of $\nabla \cdot \tau$ is essentially quadratic terms. In addition, to deal with the wildest term $\mathbb{P} \nabla \cdot (u \cdot \nabla \tau)$, we introduce a proposition related to some $\big[\mathbb{P}\; \text{div} , u\cdot \nabla \big]$ type commutator:
\begin{equation}\nonumber
\mathbb{P} \nabla \cdot(u\cdot \nabla \tau) = \mathbb{P} (u\cdot \nabla \mathbb{P}\nabla \cdot \tau)+ \text{some terms containing}\; \nabla u.
\end{equation}
For more details, we refer to Proposition \ref{prop} and the estimate \eqref{eqM4}.
~\\~
\section{Application for the Hookean elastic materials}

In this section, we will apply Theorem \ref{thm} to the following
three dimensional incompressible viscoelastic system with Hookean elasticity:
\begin{equation}\label{Hk}
\begin{cases}
u_t+ u\cdot \nabla u - \Delta u + \nabla p = \nabla \cdot (F F^\top), \quad\quad  (t, x) \in \mathbb{R}^+ \times \mathbb{R}^3,\\
F_t + u \cdot \nabla F = \nabla u F,\\
\nabla \cdot u = 0,\\
F(0,x) = F_0(x), \quad u(0,x) = u_0(x).
\end{cases}
\end{equation}
Here $u=(u_1,u_2, u_3)^{\top}$ is the velocity, $p$ presents the scalar pressure of fluid and $F$ denotes the deformation tensor. We adopt the following notations
\begin{equation}\nonumber
[\nabla u]_{ij} = \partial_j u_i, \quad [\nabla \cdot F]_{i} = \sum_{j} \partial_j F_{ij}.
\end{equation}
For detailed physical background, we refer to \cite{Larson, LinLZ} and references therein.

We consider the case $F$ is near an identity matrix and let $U = F - I$.
Through the analysis of linearized system of $(u, U)$, we can derive decay estimate of $\nabla \cdot U$.
However, there is no much more information about $\nabla \times U$.
To overcome this problem, Lin, Liu and Zhang \cite{LinLZ} studied an auxiliary vector field with the physical assumption $\nabla \cdot U_0^\top = 0$. They proved the global existence of classical small solutions in 2D case (we refer to \cite{LZ, LeiLZ2} for different approaches).
For the 3D case, Lei, Liu and Zhou \cite{LeiLZ} found a curl structure which is physical and compatible with system \eqref{Hk}.
They see $\nabla \times U_0$ as a higher order term and then proved the results of global small solutions in both 2D and 3D.
We also refer to Chen and Zhang \cite{CZ} for a curl free structure of $F_0^{-1}- I$.
Initial-boundary value problem was done by Lin and Zhang \cite{LinZ}. Qian and Zhang \cite{QZ} generalized the results to compressible case. The result of critical $L^p$ framework was given by Zhang and Fang \cite{ZF}.

Through a simple analysis of system \eqref{Hk} we know that the nonlinear term $\nabla \cdot (F F^\top)$ may be the
most difficult.
The above excellent works make sufficient use of the physical assumption presents div-curl structure of deformation tensor.
Now, we try to treat ``nonlinear term'' $\nabla \cdot (F F^\top)$
as a ``linear term'' and give the following formulation.
By considering $(u, F F^\top)$, we have
\begin{equation}\nonumber
\begin{cases}
u_t+ u\cdot \nabla u - \Delta u + \nabla p = \nabla \cdot (F F^\top),\\
(FF^\top)_t + u \cdot \nabla (FF^\top) = \nabla u F F^\top + FF^\top (\nabla u)^\top ,\\
\nabla \cdot u = 0.
\end{cases}
\end{equation}
Denote $G = F F^\top-  I$ and notice \eqref{du}, \eqref{omegau}, we have
\begin{equation}\nonumber
\begin{split}
\nabla u G + G(\nabla u)^\top
&= (D(u) + \Omega(u)) G +G (D(u)-\Omega(u)), \\
&= \Omega(u) G  - G \Omega(u)+D(u)G + G D(u). \\
\end{split}
\end{equation}
Thus, the system of $(u,G)$ can be written as
\begin{equation}\label{eqG}
\begin{cases}
u_t+ u\cdot \nabla u - \Delta u + \nabla p = \nabla \cdot G,\\
G_t + u \cdot \nabla G + Q(G, \nabla u)=  2 D(u), \\
\nabla \cdot u = 0.
\end{cases}
\end{equation}
It just becomes the Oldroyd-B model \eqref{eq:1.1} introduced in Section 1.
For more relations of these models we refer to \cite{Larson, LinLZ}.
Hence, we have the following corollary.

\begin{corollary}\label{cor}
Suppose that $\nabla \cdot u_0 = 0$ and initial data  $|\nabla|^{-1} u_0, |\nabla|^{-1}(F_0 - I)  \in H^3(\mathbb{R}^3)$. Then there exists a small constant $\varepsilon$ such that system \eqref{Hk} admits a
unique global classical solution provided that
$$ \||\nabla|^{-1}u_0\|_{H^3} + \||\nabla|^{-1}(F_0 - I)\|_{H^3} \leq \varepsilon.$$
\end{corollary}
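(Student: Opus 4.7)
The plan is to reduce Corollary \ref{cor} to Theorem \ref{thm} via the reformulation already carried out in \eqref{eqG}. Setting $G := FF^\top - I$, the computation leading to \eqref{eqG} shows that any classical solution $(u,F)$ of \eqref{Hk} yields a solution $(u, G)$ of \eqref{eqG}, which is precisely the Oldroyd-B system \eqref{eq:1.1} in the non-damped regime $a = 0$ with $\mu = \mu_1 = 1$, $\mu_2 = 2$. Moreover $G$ is symmetric because $FF^\top$ is, and $\nabla\cdot u = 0$ is inherited from the hypothesis on $u_0$.

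First I would verify the smallness of the initial datum $G_0 := F_0 F_0^\top - I$ in the norm demanded by Theorem \ref{thm}. Writing $G_0 = U_0 + U_0^\top + U_0 U_0^\top$ with $U_0 := F_0 - I$, the linear part contributes at most $2\varepsilon$ to $\||\nabla|^{-1} G_0\|_{H^3}$. The assumption $\||\nabla|^{-1} U_0\|_{H^3}\leq \varepsilon$ is equivalent to $U_0 \in \dot H^{-1}\cap H^2$ with norm $\lesssim \varepsilon$, so for the quadratic part I would establish
\[
 \||\nabla|^{-1}(U_0 U_0^\top)\|_{H^3}\lesssim \|U_0\|_{H^2}^2\lesssim \varepsilon^2,
\]
by bounding the inhomogeneous $H^2$ piece through the Banach-algebra structure of $H^2(\mathbb{R}^3)$ and the $\dot H^{-1}$ (low-frequency) piece through the duality estimate $\|fg\|_{\dot H^{-1}}\lesssim \|f\|_{L^2}\|g\|_{L^3}$, which follows from $\dot H^1(\mathbb{R}^3)\hookrightarrow L^6$ and H\"older. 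Thus $\||\nabla|^{-1} G_0\|_{H^3}\leq 2\varepsilon + C\varepsilon^2$, which remains as small as needed.

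Next, Theorem \ref{thm} applied to \eqref{eqG} yields a unique global classical solution $(u, G)$ enjoying in particular $\nabla u \in L^1_t L^\infty_x$ as a by-product of its time-weighted energy framework. Given this smooth divergence-free $u$, I would solve the linear transport equation
\[
 F_t + u\cdot\nabla F = \nabla u\,F,\qquad F(0) = F_0,
\]
globally by the method of characteristics, obtaining a unique classical $F$. Setting $\widetilde G := FF^\top - I$, a direct calculation (repeating the derivation preceding \eqref{eqG}) shows $\widetilde G$ satisfies the very same equation as $G$ with the same $u$ and the same initial datum; uniqueness for that problem, which is linear in the unknown once $u$ is frozen, forces $\widetilde G \equiv G$. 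Consequently $(u, F)$ is a global classical solution of \eqref{Hk}.

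The conceptual obstacle has essentially been resolved in Section 2 through the identification \eqref{eqG}, so that Theorem \ref{thm} can be invoked verbatim. The only genuinely new technical point is the initial-data conversion, where the low-frequency ($\dot H^{-1}$) behaviour of the quadratic term $U_0 U_0^\top$ is delicate and must be handled by the duality/Sobolev estimate indicated above rather than by the naive algebra bound.
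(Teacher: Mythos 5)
Your proposal is correct and follows essentially the same route as the paper: reduce to Theorem \ref{thm} via the reformulation \eqref{eqG}, control $\||\nabla|^{-1}(F_0F_0^\top-I)\|_{H^3}$ by splitting off the quadratic term and estimating its low-frequency part through $L^{6/5}\hookrightarrow \dot H^{-1}$ (your duality bound $\|fg\|_{\dot H^{-1}}\lesssim\|f\|_{L^2}\|g\|_{L^3}$ is exactly the paper's H\"older--Sobolev step) and its $H^2$ part by the algebra property, then recover $F$ from the transport equation using $\nabla u\in L^1_tL^\infty_x$. Your explicit uniqueness argument identifying $FF^\top-I$ with the Oldroyd-B unknown $G$ is a welcome elaboration of what the paper leaves as ``it is easy to obtain the global regularity of $F$,'' but it is not a different method.
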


Notice here we do not need the assumption of div-curl structure on the initial data, this corollary can also generalize the results in \cite{CZ, LeiLZ} somehow.

\begin{remark}
To prove this corollary, we first notice that
\begin{equation}\nonumber
\begin{split}
&\||\nabla|^{-1}(F_0F_0^\top - I)\|_{H^3}\\
 \lesssim& \||\nabla|^{-1}\big((F_0 - I + I)(F_0 - I + I)^\top - I \big)\|_{H^3} \\
\lesssim&  \||\nabla|^{-1}(F_0 - I)(F_0 - I)^\top\|_{H^3} + \||\nabla|^{-1}(F_0 - I)\|_{H^3} \\
\lesssim& \|(F_0 - I)(F_0 - I)^\top\|_{L^\frac{6}{5}} +   \|(F_0 - I)(F_0 - I)^\top\|_{H^2}  +  \||\nabla|^{-1}(F_0 - I)\|_{H^3}\\
\lesssim& \|(F_0 - I)\|_{L^2}\|(F_0 - I)^\top\|_{L^3}+   \|(F_0 - I)\|_{H^2}^2  +  \||\nabla|^{-1}(F_0 - I)\|_{H^3}\\
\lesssim & \||\nabla|^{-1}(F_0 - I)\|_{H^3} + \|(F_0 - I)\|_{H^3}^2.
\end{split}
\end{equation}
Then, apply Theorem \ref{thm} to the reformulated system \eqref{eqG}, we will get the global regularity of velocity (see \eqref{energy1} and \eqref{energy2}). Now, it is easy to obtain the global regularity of $F$.
\end{remark}

\begin{remark}
In \cite{leizhen, leizhen2}, Lei proved the global existence of classical solutions to 2D Hookean incompressible viscoelastic model with smallness assumptions only on the size of initial strain tensor. Indeed, the non-singular matrix $F$ can be decomposed uniquely in the form $F=(I+V)R$, where $V$ stands the strain matrix and $R$ the rotation matrix. He showed the smallness of strain tensor $V_0$ ensures the global regularity of system. In the proof of Corollary \ref{cor}, we present the similar result in a different view. In proving the global existence of classical solutions to 3D Hookean incompressible viscoelastic model, we just need the smallness of $F_0F_0^\top - I$ rather than $F_0-I$. Obviously, the smallness of $V_0$ ensures the smallness of $F_0F_0^\top - I$.
\end{remark}

\section{Energy estimate}

\subsection{Preliminary}
Without loss of generality, set $\mu = \mu_1 = \mu_2 = 1$.
At the first of proof, we introduce the setting of energy. Based on our analysis in Section 1, we define some time-weighted energies for system \eqref{eq:1.1}. First, we give the basic energy as follows,
\begin{equation}\label{energy1}
\begin{split}
\mathcal{E}_0(t) =& \sup_{0 \leq t' \leq t} (\||\nabla|^{-1}u(t')\|_{H^3}^2 + \||\nabla|^{-1}\tau(t')\|_{H^3}^2) + \int_{0}^{t} \| u(t')\|_{H^3}^2 + \||\nabla|^{-1}\mathbb{P}\nabla \cdot \tau \|_{H^2}^2 \;dt',
\end{split}
\end{equation}
where $\mathbb{P} = \mathbb{I}- \Delta^{-1} \nabla \text{div}$ is the projection operator.
For any smooth divergence free vector $v$, we have $\mathbb{P} v = v$. And for a scalar function $\phi$, we know that $\mathbb{P} \nabla \phi = 0$.
The projection operator $\mathbb{P}$ is used to deal with pressure term $p$ satisfying
 $$\Delta p = \partial_i \partial_j (\tau_{ij}-u_i u_j).$$
Also, we define two time-weighted energies which imply the dissipative structure of system \eqref{eq:1.1},
\begin{equation}\label{energy2}
\begin{split}
\mathcal{E}_1(t) =&  \sup_{0 \leq t' \leq t} (1+t')\big(\|u(t')\|_{H^2}^2 + 2\||\nabla|^{-1}\mathbb{P}\nabla \cdot \tau(t')\|_{H^2}^2 \big) \\
&+ \int_{0}^{t} (1+t')\Big[\|\nabla u(t')\|_{H^2}^2 + \| \mathbb{P}\nabla \cdot \tau \|_{H^1}^2\Big] \;dt',\\
\mathcal{E}_2(t) =& \sup_{0 \leq t' \leq t} (1+t')^2\big(\|\nabla u(t')\|_{H^1}^2 + 2\| \mathbb{P}\nabla \cdot \tau(t')\|_{H^1}^2\big) \\
&+ \int_{0}^{t} (1+t')^2\Big[\|\nabla^2 u(t')\|_{H^1}^2 + \|\nabla \mathbb{P}\nabla \cdot \tau \|_{L^2}^2\Big] \;dt'.
\end{split}
\end{equation}
Using interpolation inequality, we easily know that
 \begin{equation}\nonumber
  \mathcal{E}_1(t) \lesssim \mathcal{E}_0^\frac{1}{2}(t)\mathcal{E}_2^\frac{1}{2}(t).
  \end{equation}
Hence, we only need to derive the estimates of $\mathcal{E}_0(t)$ and $\mathcal{E}_2(t)$.

Next, we introduce a useful proposition to deal with $\big[\mathbb{P}\; \text{div} , u\cdot \nabla \big]$ type commutators.

\begin{proposition}\label{prop}
For any smooth tensor $[\tau_{ij}]_{3 \times 3}$ and three dimensional  vector $u$, it always holds that
\begin{equation}\nonumber
\mathbb{P} \nabla \cdot(u\cdot \nabla \tau) = \mathbb{P} (u\cdot \nabla \mathbb{P}\nabla \cdot \tau)+ \mathbb{P} (\nabla u\cdot \nabla \tau)
- \mathbb{P}(\nabla u\cdot \nabla \Delta^{-1} \nabla \cdot \nabla \cdot \tau),
\end{equation}
where the $ith$ component of $\nabla u\cdot \nabla \tau$ is
\begin{equation}\nonumber
[\nabla u\cdot \nabla \tau]_i = \sum_{j}\partial_j u \cdot \nabla \tau_{ij},
\end{equation}
and also
\begin{equation}\nonumber
[\nabla u\cdot \nabla \Delta^{-1} \nabla \cdot \nabla \cdot \tau]_i = \partial_i u\cdot \nabla \Delta^{-1} \nabla \cdot \nabla \cdot \tau.
\end{equation}
\end{proposition}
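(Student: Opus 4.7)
The plan is to prove the identity by a direct index computation followed by decomposing $\nabla\cdot\tau$ into its divergence-free and gradient parts via the Helmholtz projector.

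First I would use the Leibniz rule on $\nabla\cdot(u\cdot\nabla\tau)$ componentwise. Writing $[u\cdot\nabla\tau]_{ij}=u_k\partial_k\tau_{ij}$, one gets
\begin{equation}\nonumber
[\nabla\cdot(u\cdot\nabla\tau)]_i=\partial_j(u_k\partial_k\tau_{ij})=u_k\partial_k\partial_j\tau_{ij}+\partial_j u_k\,\partial_k\tau_{ij}=[u\cdot\nabla(\nabla\cdot\tau)]_i+[\nabla u\cdot\nabla\tau]_i,
\end{equation}
where the notation for $\nabla u\cdot\nabla\tau$ is exactly the one fixed in the statement. Applying $\mathbb{P}$ yields
\begin{equation}\nonumber
\mathbb{P}\nabla\cdot(u\cdot\nabla\tau)=\mathbb{P}\bigl(u\cdot\nabla(\nabla\cdot\tau)\bigr)+\mathbb{P}(\nabla u\cdot\nabla\tau),
\end{equation}
so everything reduces to rewriting the first term on the right.

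Next I would split $\nabla\cdot\tau$ via $\mathbb{P}+(\mathbb{I}-\mathbb{P})$. Since $\mathbb{I}-\mathbb{P}=\nabla\Delta^{-1}\mathrm{div}$, set $\phi=\Delta^{-1}\nabla\cdot\nabla\cdot\tau$, so that $(\mathbb{I}-\mathbb{P})(\nabla\cdot\tau)=\nabla\phi$. Therefore
\begin{equation}\nonumber
u\cdot\nabla(\nabla\cdot\tau)=u\cdot\nabla\,\mathbb{P}(\nabla\cdot\tau)+u\cdot\nabla\nabla\phi.
\end{equation}
For the gradient-of-scalar piece I would use the identity $u_k\partial_k\partial_i\phi=\partial_i(u_k\partial_k\phi)-\partial_i u_k\,\partial_k\phi$, i.e.
\begin{equation}\nonumber
u\cdot\nabla\nabla\phi=\nabla(u\cdot\nabla\phi)-\nabla u\cdot\nabla\phi,
\end{equation}
with the second term interpreted exactly as in the statement of the proposition. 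Because $\mathbb{P}$ annihilates gradients of scalars, applying $\mathbb{P}$ kills the first term, leaving
\begin{equation}\nonumber
\mathbb{P}\bigl(u\cdot\nabla\nabla\phi\bigr)=-\mathbb{P}\bigl(\nabla u\cdot\nabla\Delta^{-1}\nabla\cdot\nabla\cdot\tau\bigr).
\end{equation}

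Assembling the three ingredients produces the claimed decomposition. There is no genuine obstacle here: the argument is purely algebraic once one commits to the right decomposition of $\nabla\cdot\tau$. The only point requiring care is bookkeeping of indices and checking that the two differently-contracted $\nabla u\cdot\nabla(\cdot)$ products agree with the conventions fixed at the end of the statement; this ensures the final expression is exactly the one claimed rather than something off by a relabeling of indices.
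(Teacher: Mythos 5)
Your proposal is correct and follows essentially the same route as the paper: Leibniz rule to peel off $\mathbb{P}(\nabla u\cdot\nabla\tau)$, then the splitting $\nabla\cdot\tau=\mathbb{P}\nabla\cdot\tau+\nabla\Delta^{-1}\nabla\cdot\nabla\cdot\tau$ and the commutation of $u\cdot\nabla$ with the gradient of the scalar $\Delta^{-1}\nabla\cdot\nabla\cdot\tau$, whose pure-gradient part is killed by $\mathbb{P}$. No gaps; the index conventions you check at the end match those fixed in the statement.
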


\begin{proof}
Using direct computation we have
\begin{equation}\nonumber
[ \nabla \cdot(u\cdot \nabla \tau)]_i =  \sum_{j}\partial_j (u\cdot \nabla \tau_{ij})
=\sum_{j}(\partial_ju\cdot \nabla \tau_{ij}) + \sum_{j}(u\cdot \nabla \partial_j\tau_{ij}).
\end{equation}
Though the notations in proposition, we can write
\begin{equation}\label{eq:2.10}
\mathbb{P} \nabla \cdot(u\cdot \nabla \tau) = \mathbb{P} (\nabla u\cdot \nabla \tau) + \mathbb{P} (u\cdot \nabla \nabla \cdot \tau).
\end{equation}
Denote $\mathbb{P}^\perp = \Delta^{-1} \nabla \text{div}$, we now compute the second part of \eqref{eq:2.10} as follows
\begin{equation}\nonumber
\begin{split}
\mathbb{P} (u\cdot \nabla \nabla \cdot \tau) =& \mathbb{P} (u\cdot \nabla \mathbb{P}\nabla \cdot \tau)  + \mathbb{P}(u\cdot \nabla \mathbb{P}^\perp \nabla \cdot \tau)\\
=&\mathbb{P} (u\cdot \nabla \mathbb{P}\nabla \cdot \tau)  + \mathbb{P}(u\cdot \nabla \Delta^{-1} \nabla \nabla \cdot \nabla \cdot \tau)\\
=&\mathbb{P} (u\cdot \nabla \mathbb{P}\nabla \cdot \tau)  + \mathbb{P}\nabla(u\cdot \nabla \Delta^{-1} \nabla \cdot \nabla \cdot \tau) - \mathbb{P}(\nabla u\cdot \nabla \Delta^{-1} \nabla \cdot \nabla \cdot \tau)\\
=&\mathbb{P} (u\cdot \nabla \mathbb{P}\nabla \cdot \tau) - \mathbb{P}(\nabla u\cdot \nabla \Delta^{-1} \nabla \cdot \nabla \cdot \tau).
\end{split}
\end{equation}
Hence, we have
\begin{equation}\nonumber
\mathbb{P} \nabla \cdot(u\cdot \nabla \tau) = \mathbb{P} (\nabla u\cdot \nabla \tau) +
\mathbb{P} (u\cdot \nabla \mathbb{P}\nabla \cdot \tau) - \mathbb{P}(\nabla u\cdot \nabla \Delta^{-1} \nabla \cdot \nabla \cdot \tau).
\end{equation}
\end{proof}

\subsection{\textit{A priori} estimate}
In this subsection, we shall derive the \textit{a priori} estimate of $\mathcal{E}_0(t)$ and $\mathcal{E}_2(t)$ respectively. First, we consider the basic energy $\mathcal{E}_0(t)$ and give the following lemma.

\begin{lemma}\label{lem1}
The energies are defined in \eqref{energy1} and \eqref{energy2}, then we have
\begin{equation}\nonumber
\mathcal{E}_0(t) \lesssim \mathcal{E}_0(0) + \mathcal{E}_0^\frac{3}{2}(t) + \mathcal{E}_2^\frac{3}{2}(t).
\end{equation}
\end{lemma}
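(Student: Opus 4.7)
The plan is to derive two complementary families of energy identities whose sum controls all four terms appearing in $\mathcal{E}_0$, and then absorb the trilinear remainders into $\mathcal{E}_0^{3/2}+\mathcal{E}_2^{3/2}$.

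I would first perform a standard $L^2$-energy estimate on the system \eqref{eq:1.1} at each derivative order $|\nabla|^k$ for $k=-1,0,1,2$, by applying $|\nabla|^k$ to each equation and pairing with $|\nabla|^k u$ or $|\nabla|^k\tau$ respectively. The pressure drops out by $\nabla\cdot u=0$, and the linear coupling $\langle|\nabla|^k\nabla\cdot\tau,|\nabla|^k u\rangle+\langle|\nabla|^k D(u),|\nabla|^k\tau\rangle$ vanishes after one integration by parts using the symmetry of $\tau$. Summing in $k$ and integrating in time controls both $\sup_{t'\le t}\bigl(\||\nabla|^{-1}u\|_{H^3}^2+\||\nabla|^{-1}\tau\|_{H^3}^2\bigr)$ and the velocity dissipation $\int_0^t\|u\|_{H^3}^2\,dt'$ by $\mathcal{E}_0(0)$ plus a nonlinear remainder $\mathcal{N}_1$.

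The remaining piece of $\mathcal{E}_0$ is the partial $\tau$-dissipation $\int_0^t\||\nabla|^{-1}\mathbb{P}\nabla\cdot\tau\|_{H^2}^2\,dt'$; to generate it I use a cross-estimate. Applying $\mathbb{P}\nabla\cdot$ to the $\tau$-equation and using $\nabla\cdot u=0$ gives $X_t=\tfrac12\Delta u+\mathcal{R}$, where $X:=\mathbb{P}\nabla\cdot\tau$ is divergence-free and $\mathcal{R}$ is genuinely quadratic. Coupled to the projected $u$-equation $u_t-\Delta u=X-\mathbb{P}(u\cdot\nabla u)$, this is the damped-wave structure flagged in the introduction. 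For $k=0,1,2$ I form the cross-quantity $\langle|\nabla|^k u,|\nabla|^{k-2}X\rangle$, differentiate in time, and use $|\nabla|^{-2}\Delta=-\mathrm{Id}$ on divergence-free fields to obtain
\[
\||\nabla|^{k-1}X\|^2 = \tfrac{d}{dt}\langle|\nabla|^k u,|\nabla|^{k-2}X\rangle + \langle|\nabla|^{k+1}u,|\nabla|^{k-1}X\rangle + \tfrac12\||\nabla|^k u\|^2 + \mathcal{R}_k.
\]
Young splits the middle term into a small multiple of the target $\||\nabla|^{k-1}X\|^2$ (absorbed on the left) and a multiple of $\||\nabla|^{k+1}u\|^2$ (lying in the velocity dissipation). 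Integrating, summing over $k$, and controlling the boundary $\sum_k\langle|\nabla|^k u,|\nabla|^{k-2}X\rangle$ by Cauchy-Schwarz (using $\||\nabla|^{k-2}X\|\lesssim\||\nabla|^{-1}\tau\|_{H^3}$) dominates the partial dissipation by a constant multiple of the supremum energy and of $\int_0^t\|u\|_{H^3}^2$, both of which are already bounded by the first family of estimates. Chaining the two yields $\mathcal{E}_0(t)\lesssim\mathcal{E}_0(0)+\mathcal{N}_1+\mathcal{N}_2$.

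The main obstacle is then bounding $\mathcal{N}_1+\mathcal{N}_2$ by $\mathcal{E}_0^{3/2}(t)+\mathcal{E}_2^{3/2}(t)$. The dangerous ingredient inside $\mathcal{N}_2$ is the transport term $\mathbb{P}\nabla\cdot(u\cdot\nabla\tau)$ from $\mathcal{R}$: a naive estimate would demand a derivative of $\tau$ that we do not have. Proposition \ref{prop} is tailored exactly for this, recasting the expression as $\mathbb{P}(u\cdot\nabla X)+\mathbb{P}(\nabla u\cdot\nabla\tau)-\mathbb{P}(\nabla u\cdot\nabla\Delta^{-1}\nabla\cdot\nabla\cdot\tau)$, so that only $X$ (already controlled by the new partial dissipation) is transported, while the remaining pieces pair $\nabla u$ (in the velocity dissipation) with zero-order operators of $\tau$. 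Combined with the 3D Sobolev embeddings $\dot H^1\hookrightarrow L^6$ and $H^2\hookrightarrow L^\infty$, Hardy--Littlewood--Sobolev for the $|\nabla|^{-1}$-level pairings, and the time decay encoded in $\mathcal{E}_2$ (which furnishes the $L^1_t$-integrability of $\|\nabla u\|_{L^\infty}$-type quantities and is the source of the $\mathcal{E}_2^{3/2}$ contribution), each trilinear term splits as (factor in a dissipation) $\times$ (factor in a supremum) $\times$ (factor in a dissipation or decay-weighted supremum), closing the inequality.
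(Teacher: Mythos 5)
Your proposal follows essentially the same two-step architecture as the paper's proof: a direct energy estimate at orders $|\nabla|^{-1}$ through $|\nabla|^{2}$ with the linear coupling cancelling by the symmetry of $\tau$, followed by a cross estimate pairing the $u$-equation with $X=\mathbb{P}\nabla\cdot\tau$ (your time-differentiated cross inner product is algebraically identical to the paper's integration by parts in time on the $u_t$ term, both relying on $\mathbb{P}\nabla\cdot D(u)=\tfrac12\Delta u$), with the trilinear remainders absorbed exactly as in the paper via the $(1+t)^2$ weights in $\mathcal{E}_2$. The only cosmetic difference is that you invoke Proposition \ref{prop} already here, whereas the paper reserves it for Lemma \ref{lem2} and in this lemma simply uses $\nabla\cdot u=0$ to write $\nabla\cdot(u\cdot\nabla\tau)$ as a double divergence of $u\otimes\tau$ and moves derivatives onto the smoother $|\nabla|^{-1}u$ factor; both routes close.
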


\begin{proof}
We divide the proof of lemma into two parts. Define $\mathcal{E}_{0,1}$ and $\mathcal{E}_{0,2}$ as follows
\begin{align*}
\mathcal{E}_{0,1}(t) = &\sup_{0 \leq t' \leq t} (\||\nabla|^{-1}u(t')\|_{H^3}^2 + \||\nabla|^{-1}\tau(t')\|_{H^3}^2) + \int_{0}^{t} \| u(t')\|_{H^3}^2 \;d t', \\
\mathcal{E}_{0,2} (t) = &\int_{0}^{t}  \||\nabla|^{-1}\mathbb{P}\nabla \cdot \tau \|_{H^2}^2 \;dt' .
\end{align*}
Then $\mathcal{E}_0 = \mathcal{E}_{0,1} + \mathcal{E}_{0,2}$, we shall first give the estimate of $\mathcal{E}_{0,1}$ .
~\\~\\~
\textbf{First Step:}
~\\~\\~
Applying the operator $\nabla^k |\nabla|^{-1}(k = 0, \cdots, 3)$ to system \eqref{eq:1.1}. Then, taking inner product with
$\nabla^k |\nabla|^{-1} u $ for the first equation  and taking inner product with $\nabla^k |\nabla|^{-1} \tau$ for the second equation. Adding them up we have
\begin{equation}\label{eq:2.2}
\frac{1}{2} \frac{d}{dt}(\||\nabla|^{-1} u\|_{H^3}^2 + \||\nabla|^{-1}\tau\|_{H^3}^2) + \| u\|_{H^3}^2 = N_1 + N_2 + N_3 + N_4,
\end{equation}
where
\begin{equation}\nonumber
\begin{split}
N_1 =& \sum_{k = 0}^{3} \int \Big( \nabla^k |\nabla|^{-1} \nabla \cdot \tau \nabla^k |\nabla|^{-1}u + \nabla^k |\nabla|^{-1} D(u) \nabla^k |\nabla|^{-1} \tau \Big) \;dx,\\
N_2 =& -\sum_{k = 0}^{3} \int  \nabla^k |\nabla|^{-1} (u\cdot \nabla u) \nabla^k|\nabla|^{-1} u \;dx,\\
N_3 =& -\int  |\nabla|^{-1}(u \cdot \nabla \tau) |\nabla|^{-1} \tau \;dx
 -\sum_{k = 0}^{2} \int \nabla^k(u \cdot \nabla \tau) \nabla^k \tau\; dx,\\
N_4 =& -\sum_{k = 0}^{3} \int \nabla^k|\nabla|^{-1} Q(\tau, \nabla u) \nabla^k |\nabla|^{-1}\tau \;dx .
\end{split}
\end{equation}

For the first term $N_1$, using integration by parts and the symmetry $\tau_{ij} = \tau_{ji}$ we have
\begin{equation}\label{eqN1}
\begin{split}
N_1 =& \sum_{k = 0}^{3} \int  \Big( \nabla^k |\nabla|^{-1} \nabla \cdot \tau \nabla^k |\nabla|^{-1} u + \sum_{i,j=1}^3 \nabla^k |\nabla|^{-1} \frac{\partial_j u_i + \partial_i u_j}{2}\nabla^k |\nabla|^{-1}\tau_{ij} \Big) dx\\
=& \sum_{k = 0}^{3} \int \nabla^k |\nabla|^{-1}\nabla \cdot \tau \nabla^k |\nabla|^{-1} u \; dx  \\
& - \sum_{k = 0}^{3}\sum_{i,j=1}^3  \frac{\nabla^k |\nabla|^{-1} u_i \nabla^k |\nabla|^{-1}\partial_j\tau_{ij} + \nabla^k |\nabla|^{-1} u_j\nabla^k |\nabla|^{-1} \partial_i \tau_{ij}}{2} \;dx\\
=& \sum_{k = 0}^{3} \int \Big( \nabla^k |\nabla|^{-1} \nabla \cdot \tau \nabla^k  |\nabla|^{-1} u - \sum_{i,j=1}^3 \nabla^k |\nabla|^{-1} u_i \nabla^k |\nabla|^{-1}\partial_j\tau_{ij} \Big) dx\\
=& 0.
\end{split}
\end{equation}

For the second term $N_2$, by divergence free condition, H\"{o}lder inequality and Sobolev imbedding theorem, we directly know that
\begin{equation}\nonumber
\begin{split}
N_2 \lesssim& \|u\otimes u\|_{H^3} \||\nabla|^{-1}u\|_{H^3}\\
\lesssim& \|u\|_{L^\infty}\|u\|_{H^3}\||\nabla|^{-1}u\|_{H^3}\\
\lesssim& \|u\|_{H^3}^2\||\nabla|^{-1}u\|_{H^3}.
\end{split}
\end{equation}
Hence,
\begin{equation}\label{eqN2}
\begin{split}
\int_{0}^{t} |N_2(t')| dt' \lesssim & \sup_{0 \leq t' \leq t}\||\nabla|^{-1}u(t')\|_{H^3} \int_{0}^{t} \| u\|_{H^3}^2 dt'\\
\lesssim & \mathcal{E}_0^\frac{3}{2}(t).
\end{split}
\end{equation}

Similarly, for the next term $N_3$, notice the divergence free condition $\nabla \cdot u = 0$ we get
\begin{equation}\nonumber
\begin{split}
|N_3| \lesssim& \|u \otimes \tau \|_{L^2} \||\nabla|^{-1} \tau\|_{L^2} +\sum_{k = 1}^{2}\Big| \int \nabla^{k-1}(\nabla u \cdot \nabla \tau) \nabla^k \tau \; dx \Big|\\
\lesssim& \|u\|_{L^\infty} \||\nabla|^{-1} \tau\|_{H^1}^2 +\big(\|\nabla u\|_{L^\infty}\|\nabla \tau\|_{H^1}+ \|\nabla^2 u\|_{L^6}\|\nabla \tau\|_{L^3} \big)\|\nabla \tau\|_{H^1}\\
\lesssim& \big(\|\nabla u\|_{L^2}^\frac{1}{2}\|\nabla^2 u\|_{L^2}^\frac{1}{2}+\|\nabla^2 u\|_{H^1} \big)\||\nabla|^{-1} \tau\|_{H^3}^2.
\end{split}
\end{equation}
Thus, we have the following estimate
\begin{equation}\label{eqN3}
\begin{split}
\int_{0}^{t}|N_3(t')| \lesssim& \sup_{0 \leq t' \leq t} \||\nabla|^{-1} \tau(t')\|_{H^3}^2 \cdot \Big\{ \int_{0}^{t}(1+t')^{-\frac{3}{4}} (1+t')^\frac{1}{4}\|\nabla u\|_{L^2}^\frac{1}{2}(1+t')^\frac{1}{2}\|\nabla^2 u\|_{L^2}^\frac{1}{2} dt'\\
&+ \int_{0}^{t}\|\nabla^2 u\|_{H^1} dt' \Big\}\\
\lesssim & \mathcal{E}_0(t)\cdot \big(\mathcal{E}_1^\frac{1}{4}(t) \mathcal{E}_2^\frac{1}{4}(t) + \mathcal{E}_2^\frac{1}{2}(t)\big)\\
\lesssim & \mathcal{E}_0(t)\cdot \big(\mathcal{E}_0^\frac{1}{8}(t) \mathcal{E}_2^\frac{3}{8}(t) + \mathcal{E}_2^\frac{1}{2}(t)\big)\\
\lesssim &\mathcal{ E}_0(t)\cdot \big(\mathcal{E}_0^\frac{1}{2}(t) + \mathcal{E}_2^\frac{1}{2}(t)\big)\\
\lesssim &\mathcal{E}_0^\frac{3}{2}(t) + \mathcal{E}_2^\frac{3}{2}(t).
\end{split}
\end{equation}

Now, we turn to the last term $N_4$. Using H\"{o}lder inequality and Sobolev imbedding theorem, it yields
\begin{equation}\nonumber
\begin{split}
|N_4| \lesssim& \||\nabla|^{-1} Q\|_{L^2}\||\nabla|^{-1} \tau\|_{L^2} + \|Q\|_{H^2}\|\tau\|_{H^2}\\
\lesssim& \|Q\|_{L^\frac{6}{5}} \||\nabla|^{-1} \tau\|_{L^2}+ \big(\|\nabla u\|_{L^\infty}\|\tau\|_{H^2} + \|\nabla^2 u\|_{L^6} \|\tau\|_{W^{1,3}} + \|\nabla^3 u\|_{L^2} \|\tau\|_{L^\infty} \big )\|\tau\|_{H^2}\\
\lesssim& \|\nabla u\|_{L^3}\|\tau\|_{L^2}\||\nabla|^{-1} \tau\|_{L^2}+\|\nabla^2 u\|_{H^1} \|\tau\|_{H^2}^2\\
\lesssim& (\|\nabla u\|_{L^2}^{\frac{1}{2}} \|\nabla^2 u\|_{L^2}^{\frac{1}{2}} +  \|\nabla^2 u\|_{H^1} )\||\nabla|^{-1}\tau\|_{H^3}^2,
\end{split}
\end{equation}
which implies
\begin{equation}\label{eqN4}
\begin{split}
\int_{0}^{t} |N_4| \lesssim& \sup_{0 \leq t' \leq t} \||\nabla|^{-1}\tau(t')\|_{H^3}^2 \int_{0}^{t} \|\nabla u\|_{L^2}^{\frac{1}{2}} \|\nabla^2 u\|_{L^2}^{\frac{1}{2}} + \|\nabla^2 u\|_{H^1} \; dt'\\
\lesssim& \mathcal{E}_0(t)(\mathcal{E}_1^\frac{1}{4}(t) \mathcal{E}_2^\frac{1}{4}(t)+ \mathcal{E}_2^\frac{1}{2}(t))\\
\lesssim& \mathcal{E}_0^\frac{3}{2}(t) + \mathcal{E}_2^\frac{3}{2}(t).
\end{split}
\end{equation}

Integrating \eqref{eq:2.2} with time, then according to the estimates of $N_1 \sim N_4$, i.e., \eqref{eqN1}, \eqref{eqN2}, \eqref{eqN3}, \eqref{eqN4}, it holds that
\begin{equation}\label{eqE01}
\mathcal{E}_{0,1} (t)
\lesssim  \; \mathcal{E}_0(0) + \mathcal{E}_0^\frac{3}{2}(t) + \mathcal{E}_2^\frac{3}{2}(t).
\end{equation}
\textbf{Second Step:}
~\\~\\~
Next, we shall deal with the left part $\mathcal{E}_{0,2} (t)$.
Operating $\mathbb{P}$ on the first equation of system \eqref{eq:1.1}. Recall that
$\mathbb{P} = \mathbb{I}- \Delta^{-1}\nabla \text{div}$, we have
\begin{equation}\nonumber
u_t + \mathbb{P}(u \cdot \nabla u) - \Delta u = \mathbb{P} \nabla \cdot \tau.
\end{equation}
Applying operator $\nabla^k |\nabla|^{-1} (k = 0\sim 2 )$  to the above equation, then taking inner product with $\nabla^k |\nabla|^{-1}\mathbb{P} \nabla \cdot \tau$, we get
\begin{equation}\label{eq:2.3}
\||\nabla|^{-1}\mathbb{P}\nabla \cdot \tau \|_{H^2}^2 = N_5 + N_6 + N_7,
\end{equation}
where,
\begin{equation}\nonumber
\begin{split}
N_5 =& \sum_{k = 0}^2 \int -\nabla^k |\nabla|^{-1}\Delta u \nabla^k |\nabla|^{-1}\mathbb{P} \nabla \cdot \tau \;dx,\\
N_6 =& \sum_{k = 0}^2 \int \nabla^k |\nabla|^{-1}\mathbb{P}(u \cdot \nabla u) \nabla^k |\nabla|^{-1}\mathbb{P} \nabla \cdot \tau \;dx,\\
N_7 =& \sum_{k = 0}^2 \int \nabla^k |\nabla|^{-1}u_t \nabla^k |\nabla|^{-1}\mathbb{P} \nabla \cdot \tau \;dx.
\end{split}
\end{equation}

For the first term $N_5$, using H\"{o}lder inequality we have
\begin{equation}\nonumber
N_5
\lesssim \|\nabla u\|_{H^3} \| |\nabla|^{-1}\mathbb{P} \nabla \cdot \tau\|_{H^2}.
\end{equation}
Hence, we can obtain
\begin{equation}\label{eqN5}
\begin{split}
\int_{0}^t |N_5(t')| dt' \lesssim& \int_{0}^ t \|\nabla u\|_{H^3} \||\nabla|^{-1} \mathbb{P} \nabla \cdot \tau\|_{H^2} \; dt'\\
\lesssim& \mathcal{E}_{0,1}^\frac{1}{2}(t)\mathcal{E}_{0,2}^\frac{1}{2}(t).
\end{split}
\end{equation}

In the estimate of $N_6$, we will use the property that Riesz operator $\mathcal{R}_i = (-\Delta)^{-\frac{1}{2}} \nabla_i$ is $L^2$ bounded. Hence, it holds for any vector $v$, $ \|\mathbb{P}v \|_{L^2} \lesssim \|v \|_{L^2}$. Using H\"{o}lder inequality and Sobolev imbedding theorem we get
\begin{equation}\nonumber
\begin{split}
|N_6| \lesssim& \|u \otimes u\|_{H^2} \||\nabla|^{-1}\mathbb{P}\nabla \cdot \tau\|_{H^2},\\
\lesssim& \|u\|_{L^\infty} \|u\|_{H^2} \||\nabla|^{-1}\mathbb{P}\nabla \cdot \tau\|_{H^2},\\
\lesssim& \|u\|_{H^2} ^2 \||\nabla|^{-1}\mathbb{P}\nabla \cdot \tau\|_{H^2}.
\end{split}
\end{equation}
Obviously, we can derive the estimate of $N_6$ as follows
\begin{equation}\label{eqN6}
\begin{split}
\int_{0}^t |N_6(t')| dt' \lesssim & \sup_{0 \leq t' \leq t} \|u(t')\|_{H^2}\int_{0}^t \|u\|_{H^2} \||\nabla|^{-1}\mathbb{P}\nabla \cdot \tau\|_{H^2 } dt'\\
\lesssim& \mathcal{E}_0^\frac{3}{2}(t).
\end{split}
\end{equation}

Now, we turn to the last term $N_7$. Using integration by parts and the fact $\mathbb{P} u = u$, we rewrite $N_7$ into two parts,
\begin{equation}\label{eq:2.1}
\begin{split}
N_7 =& \sum_{k = 0}^2 \int \nabla^k |\nabla|^{-1}u_t \nabla^k  |\nabla|^{-1}\nabla \cdot \tau \;dx\\
=&\sum_{k = 0}^2 \frac{d}{dt} \int \nabla^k |\nabla|^{-1}u \nabla^k |\nabla|^{-1} \nabla \cdot \tau \;dx- \sum_{k = 0}^2 \int \nabla^k |\nabla|^{-1}u \nabla^k |\nabla|^{-1} \nabla \cdot \tau_t \;dx.
\end{split}
\end{equation}
According to the second equation of system \eqref{eq:1.1}, we have the following equality
\begin{equation}\label{eqq}
\nabla \cdot \tau_t + \nabla \cdot (u\cdot \nabla \tau) + \nabla \cdot Q(\tau, \nabla u) = \frac{1}{2}\Delta u.
\end{equation}
Applying this equality to the last part in \eqref{eq:2.1}, we shall get
\begin{align*}
&\sum_{k = 0}^2 \int \nabla^k |\nabla|^{-1}u \nabla^k |\nabla|^{-1} \nabla \cdot \tau_t \;dx \\
= &\sum_{k = 0}^2 \int \nabla^k |\nabla|^{-1} u \nabla^k |\nabla|^{-1}\Big[ \frac{1}{2}\Delta u -  \nabla \cdot (u\cdot \nabla \tau) - \nabla \cdot Q(\tau, \nabla u) \Big]dx.
\end{align*}
Then, applying H\"{o}lder inequality and Sobolev imbedding theorem, we obtain
\begin{equation}\nonumber
\begin{split}
&\Big|\sum_{k = 0}^2 \int \nabla^k |\nabla|^{-1}u \nabla^k  |\nabla|^{-1}\nabla \cdot \tau_t \;dx \Big|\\
\lesssim& \|u\|_{H^2}^2 + \|u\|_{H^2}\|u\otimes \tau\|_{H^2} + \||\nabla|^{-1} u\|_{H^2}\|Q\|_{H^2}\\
\lesssim& \| u\|_{H^2}^2 + \| u\|_{H^2}^2\|\tau\|_{H^2}\\
&+ \||\nabla|^{-1} u\|_{H^2}\big(\|\nabla u\|_{L^\infty}\|\tau\|_{H^2} + \|\nabla^2 u\|_{L^6}\|\tau\|_{W^{1,3}} + \|\nabla^3 u\|_{L^2}\| \tau\|_{L^\infty}\big)\\
\lesssim& \| u\|_{H^2}^2 + \| u\|_{H^2}^2\|\tau\|_{H^2}+ \||\nabla|^{-1} u\|_{H^2}\|\nabla^2 u\|_{H^1}\|\tau\|_{H^2}.
\end{split}
\end{equation}
Thus, we get
\begin{equation}\label{eqN7}
\begin{split}
\Big | \int_{0}^{t} N_7(t') dt' \Big | \lesssim& \sup_{0\leq t' \leq t} \||\nabla|^{-1} u(t')\|_{H^2} \|\tau(t')\|_{H^2} + \int_0^t \| u\|_{H^2}^2  dt'\\
 &+\sup_{0 \leq t' \leq t}\|\tau(t')\|_{H^2} \int_0^t  \| u\|_{H^2}^2 dt' \\
 &+ \sup_{0\leq t' \leq t} \||\nabla|^{-1} u(t')\|_{H^2}\|\tau(t')\|_{H^2} \int_0^t \|\nabla^2 u\|_{H^1} dt'\\
 \lesssim& \mathcal{E}_{0,1}(t) + \mathcal{E}_0^\frac{3}{2}(t) + \mathcal{E}_0(t)\mathcal{E}_2^\frac{1}{2}(t)\\
\lesssim& \mathcal{E}_{0,1}(t) + \mathcal{E}_0^\frac{3}{2}(t) + \mathcal{E}_2^\frac{3}{2}(t).
\end{split}
\end{equation}

Integrating \eqref{eq:2.3} with time, according to the estimates \eqref{eqN5}, \eqref{eqN6}, \eqref{eqN7} and Young inequality, we can get the estimate of $\mathcal{E}_{0,2}(t)$ as follows
\begin{equation}\label{eqE02}
\begin{split}
\mathcal{E}_{0,2}(t) =&  \int_{0}^t \||\nabla|^{-1}\mathbb{P}\nabla \cdot \tau \|_{H^2}^2 dt'\\
\lesssim& \mathcal{E}_{0,1}(t) + \mathcal{E}_0^\frac{3}{2}(t) + \mathcal{E}_2^\frac{3}{2}(t).
\end{split}
\end{equation}
We now combine the estimates of $\mathcal{E}_{0,1}(t)$ and $\mathcal{E}_{0,2}(t)$ together to finish this lemma's proof.
Multiplying \eqref{eqE01} by a suitable large number and plus \eqref{eqE02}, we finally obtain
\begin{equation}\nonumber
\mathcal{E}_0(t) \lesssim \mathcal{E}_0(0) + \mathcal{E}_0^\frac{3}{2}(t) + \mathcal{E}_2^\frac{3}{2}(t).
\end{equation}

\end{proof}

Next, we shall consider the time-weighted energy $\mathcal{E}_2(t)$ which represents the good decay properties of higher order norms of solutions and then give the following lemma.
~\\~\\~
\begin{lemma}\label{lem2}
The energies are defined in \eqref{energy1} and \eqref{energy2}, then we have
\begin{equation}\nonumber
\mathcal{E}_2(t) \lesssim \mathcal{E}_0(t) + \mathcal{E}_0^\frac{3}{2}(t) + \mathcal{E}_2^\frac{3}{2}(t).
\end{equation}
\end{lemma}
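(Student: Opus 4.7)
The plan is to mirror the two-step structure of Lemma \ref{lem1}, decomposing $\mathcal{E}_2 = \mathcal{E}_{2,1} + 2\mathcal{E}_{2,2}$ with
\begin{align*}
\mathcal{E}_{2,1}(t) &:= \sup_{0\le t'\le t}(1+t')^2\|\nabla u(t')\|_{H^1}^2 + \int_0^t (1+t')^2 \|\nabla^2 u\|_{H^1}^2\,dt', \\
\mathcal{E}_{2,2}(t) &:= \sup_{0\le t'\le t}(1+t')^2\|\mathbb{P}\nabla\cdot\tau(t')\|_{H^1}^2 + \frac{1}{2}\int_0^t (1+t')^2 \|\nabla\mathbb{P}\nabla\cdot\tau\|_{L^2}^2\,dt',
\end{align*}
and bounding each piece separately before combining.

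For the first step, I apply $\nabla^k$ with $k=1,2$ to both equations of \eqref{eq:1.1}, pair the velocity equation with $\nabla^k u$, the stress equation with $\nabla^k\tau$, and sum over $k$. By $\tau_{ij}=\tau_{ji}$ and $\nabla\cdot u=0$ the linear coupling cancels exactly as in $N_1$ of Lemma \ref{lem1}, leaving
\[
\frac{1}{2}\frac{d}{dt}\bigl(\|\nabla u\|_{H^1}^2+\|\nabla\tau\|_{H^1}^2\bigr)+\|\nabla^2 u\|_{H^1}^2=\text{NL}.
\]
Multiplying by $(1+t')^2$ and integrating, the target sup of $(1+t)^2\|\nabla u(t)\|_{H^1}^2$ and the dissipation integral appear on the left, while the derivative of the weight produces a remainder $2\int_0^t(1+t')\bigl(\|\nabla u\|_{H^1}^2+\|\nabla\tau\|_{H^1}^2\bigr)\,dt'$. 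The velocity part is $\lesssim \mathcal{E}_1\lesssim \mathcal{E}_0^{1/2}\mathcal{E}_2^{1/2}$; the stress part is the subtle term, handled by combining the pointwise bound $\|\nabla\tau\|_{H^1}^2\lesssim \||\nabla|^{-1}\tau\|_{H^3}^2\le \mathcal{E}_0$ with the $(1+t)$-weighted partial dissipation of $\mathbb{P}\nabla\cdot\tau$ already available in $\mathcal{E}_1$, using Gagliardo--Nirenberg interpolation to trade regularity for decay.

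For the second step, I rewrite the momentum equation as $\mathbb{P}\nabla\cdot\tau = u_t + \mathbb{P}(u\cdot\nabla u) - \Delta u$, apply $\nabla^k$ for $k=0,1$, and pair with $\nabla^k\mathbb{P}\nabla\cdot\tau$. The $u_t$-contribution is treated by integration by parts in time together with the identity
\[
\mathbb{P}\nabla\cdot\tau_t=\frac{1}{2}\Delta u-\mathbb{P}\nabla\cdot(u\cdot\nabla\tau)-\mathbb{P}\nabla\cdot Q(\tau,\nabla u)
\]
coming from the stress equation, which converts the leading linear piece of $-\int\nabla^k u\cdot\nabla^k\mathbb{P}\nabla\cdot\tau_t$ into $\frac{1}{2}\|\nabla^{k+1}u\|_{L^2}^2$, absorbable by the dissipation in $\mathcal{E}_{2,1}$. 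After inserting the $(1+t')^2$ weight and integrating, the $-\Delta u\cdot\mathbb{P}\nabla\cdot\tau$ term is absorbed via Young's inequality into the target dissipation $\int(1+t')^2\|\nabla\mathbb{P}\nabla\cdot\tau\|_{L^2}^2\,dt'$, while the boundary term from the time-IBP is dominated by the supremum portions of $\mathcal{E}_0$ and $\mathcal{E}_2$.

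I expect the main difficulty to be the bookkeeping of the nonlinear remainders under the weight $(1+t')^2$: every cubic term must be matched to a product $\mathcal{E}_0^{a}\mathcal{E}_2^{b}$ with $a+b=3/2$. I split the weight $(1+t')^2=(1+t')^{\alpha}(1+t')^{2-\alpha}$ between the factors of each integrand and assign each piece to whichever of $\mathcal{E}_0$, $\mathcal{E}_1$, $\mathcal{E}_2$ supplies the needed decay, using $\mathcal{E}_1\lesssim \mathcal{E}_0^{1/2}\mathcal{E}_2^{1/2}$ freely. The wildest term is $\mathbb{P}\nabla\cdot(u\cdot\nabla\tau)$, which I rewrite via Proposition \ref{prop} to expose either the dissipative quantity $\mathbb{P}\nabla\cdot\tau$ or a derivative of $u$; the rest are treated by H\"older's inequality and Sobolev embeddings in direct analogy with the proof of Lemma \ref{lem1}. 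Collecting everything yields $\mathcal{E}_2\lesssim \mathcal{E}_0+\mathcal{E}_0^{3/2}+\mathcal{E}_2^{3/2}$.
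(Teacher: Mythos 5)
Your overall architecture (two steps: a time-weighted energy identity giving the sup and the velocity dissipation, then reading the momentum equation backwards to extract $\int(1+t')^2\|\nabla\mathbb{P}\nabla\cdot\tau\|_{L^2}^2\,dt'$, with Proposition \ref{prop} for the commutator) matches the paper. But your first step contains a genuine gap. You propose to pair the stress equation with $\nabla^k\tau$ itself, obtaining
\[
\frac{1}{2}\frac{d}{dt}\bigl(\|\nabla u\|_{H^1}^2+\|\nabla\tau\|_{H^1}^2\bigr)+\|\nabla^2 u\|_{H^1}^2=\mathrm{NL},
\]
so that after multiplying by $(1+t')^2$ the weight derivative produces $\int_0^t(1+t')\|\nabla\tau\|_{H^1}^2\,dt'$. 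This quantity is not controlled by any of $\mathcal{E}_0,\mathcal{E}_1,\mathcal{E}_2$: the only time-integrated information on the stress is on $\mathbb{P}\nabla\cdot\tau$, and the interpolation you invoke cannot work, because $\mathbb{P}\nabla\cdot$ has a large kernel on symmetric tensors (e.g.\ $\tau_{ij}=\partial_i\partial_j\psi$ gives $\nabla\cdot\tau=\nabla\Delta\psi$, hence $\mathbb{P}\nabla\cdot\tau=0$ while $\|\nabla\tau\|_{H^1}$ is arbitrary). The only available bound is the undecaying $\|\nabla\tau\|_{H^1}^2\lesssim\mathcal{E}_0$, and $\int_0^t(1+t')\,dt'$ diverges. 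This is exactly the ``lack of full dissipation in $\tau$'' the lemma is designed to circumvent.

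The missing idea is that the time-weighted energy must be run on the pair $(\nabla^{k+1}u,\ \nabla^k\mathbb{P}\nabla\cdot\tau)$ rather than $(\nabla^{k+1}u,\ \nabla^{k+1}\tau)$: apply $\nabla^k\mathbb{P}\nabla\cdot$ to the stress equation (whose right-hand side becomes $\frac{1}{2}\nabla^k\Delta u$) and test with $2\nabla^k\mathbb{P}\nabla\cdot\tau$, so the linear coupling still cancels and the weight-derivative remainder is $(1+t)\bigl(\|\nabla u\|_{H^1}^2+2\|\mathbb{P}\nabla\cdot\tau\|_{H^1}^2\bigr)$, which is precisely $\lesssim\mathcal{E}_1\lesssim\mathcal{E}_0^{1/2}\mathcal{E}_2^{1/2}$. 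This also repairs a second defect of your decomposition: you place $\sup(1+t')^2\|\mathbb{P}\nabla\cdot\tau\|_{H^1}^2$ into $\mathcal{E}_{2,2}$ and expect it from the second step, but the second step is a pointwise-in-time identity integrated in time and yields only the dissipation integral, never a sup bound; the sup of the weighted $\|\mathbb{P}\nabla\cdot\tau\|_{H^1}^2$ can only come from the $\frac{d}{dt}$ in the first step, which in your version estimates the wrong quantity.
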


\begin{proof}
Like the process in Lemma \ref{lem1}, we first divide the proof into two parts. Define $\mathcal{E}_{2,1}$ and $\mathcal{E}_{2,2}$ as follows
\begin{align*}
\mathcal{E}_{2,1}(t) =& \sup_{0 \leq t' \leq t} (1+t')^2(\|\nabla u(t')\|_{H^1}^2 + 2\| \mathbb{P}\nabla \cdot \tau(t')\|_{H^1}^2) + \int_{0}^{t} (1+t')^2\|\nabla^2 u(t')\|_{H^1}^2 \;dt', \\
\mathcal{E}_{2,2}(t) =& \int_{0}^{t} (1+t')^2  \|\nabla \mathbb{P}\nabla \cdot \tau \|_{L^2}^2 \; dt' .
\end{align*}
Then $\mathcal{E}_2 = \mathcal{E}_{2,1} + \mathcal{E}_{2,2}$, we shall first give the estimate of $\mathcal{E}_{2,1}$ .
~\\~\\~
\textbf{First Step:}
~\\~\\~
Operating $\nabla^{k+1} (k = 0, 1)$ derivative on the first equation of system \eqref{eq:1.1} and then operating $\nabla^k \mathbb{P} \nabla \cdot $ on the second equation of system \eqref{eq:1.1}, we will get the following system
\begin{equation}\label{eq:2.4}
\begin{cases}
\nabla^{k+1}u_t + \nabla^{k+1}(u\cdot \nabla u) -  \nabla^{k+1}\Delta u + \nabla^{k+1}\nabla p =  \nabla^{k+1}\nabla \cdot \tau, \\
\nabla^k \mathbb{P} \nabla \cdot\tau_t + \nabla^k \mathbb{P} \nabla \cdot(u\cdot \nabla \tau)  + \nabla^k \mathbb{P} \nabla \cdot Q(\tau, \nabla u) =  \frac{1}{2}\nabla^k  \Delta u .
\end{cases}
\end{equation}
Notice the coefficients in above system, we take inner product with $\nabla^{k+1} u$ for the first equation of \eqref{eq:2.4} and take inner product with $2\nabla^k \mathbb{P} \nabla \cdot \tau$ for the second equation of \eqref{eq:2.4}. Adding the time weight
$(1+t)^2$, we will get
\begin{equation}\label{eq:2.5}
\frac{1}{2}\frac{d}{dt}(1+t)^2 \big(\|\nabla u(t)\|_{H^1}^2 + 2\| \mathbb{P}\nabla \cdot \tau(t)\|_{H^1}^2 \big)
+ (1+t)^2\|\nabla^2 u(t)\|_{H^1}^2 = M_1 + M_2 + M_3 + M_4 + M_5,
\end{equation}
where,
\begin{equation}\nonumber
\begin{split}
M_1 =& (1+t)^2\sum_{k = 0}^1  \int \nabla^{k+1}\nabla \cdot \tau \nabla^{k+1} u + \nabla^k  \Delta u \nabla^k \mathbb{P} \nabla \cdot \tau \; dx ,\\
M_2 =& (1+t)\big(\|\nabla u(t)\|_{H^1}^2 + 2\| \mathbb{P}\nabla \cdot \tau(t)\|_{H^1}^2 \big),\\
M_3 =& - (1+t)^2\sum_{k = 0}^1 \int \nabla^{k+1}(u\cdot \nabla u) \nabla^{k+1} u \;dx,\\
M_4 =& - 2(1+t)^2\sum_{k = 0}^1 \int \nabla^k \mathbb{P} \nabla \cdot(u\cdot \nabla \tau) \nabla^k \mathbb{P} \nabla \cdot \tau \;dx,\\
M_5 =& - 2(1+t)^2\sum_{k = 0}^1 \int \nabla^k \mathbb{P} \nabla \cdot Q(\tau, \nabla u) \nabla^k \mathbb{P} \nabla \cdot \tau \;dx.
\end{split}
\end{equation}

Similarly, we shall estimate each term on the right hand side of \eqref{eq:2.5}.
First, for the term $M_1$, using integration by parts and divergence free condition $\nabla \cdot u = 0$, we can compute
\begin{equation}\label{eqM1}
\begin{split}
M_1 =& (1+t)^2\sum_{k = 0}^1  \int -\nabla^k \nabla \cdot \tau \nabla^k \Delta u + \nabla^k  \Delta \mathbb{P} u \nabla^k \nabla \cdot \tau \;dx\\
=& (1+t)^2\sum_{k = 0}^1  \int -\nabla^k \nabla \cdot \tau \nabla^k \Delta u + \nabla^k  \Delta  u \nabla^k \nabla \cdot \tau \;dx\\
=& 0.
\end{split}
\end{equation}

For the term $M_2$, we can directly derive
\begin{equation}\label{eqM2}
\begin{split}
\int_{0}^t |M_2(t')| dt' \lesssim& \int_{0}^t (1+t')\big(\|\nabla u(t')\|_{H^1}^2 + 2\| \mathbb{P}\nabla \cdot \tau(t')\|_{H^1}^2\big) \;dt'\\
\lesssim& \mathcal{E}_1(t)\\
\lesssim& \mathcal{E}_0^\frac{1}{2}(t) \mathcal{E}_2^\frac{1}{2}(t).
\end{split}
\end{equation}

Using integration by parts, H\"{o}lder inequality and Sobolev imbedding theorem, we get the estimate of $M_3$ as follows
\begin{equation}\nonumber
\begin{split}
|M_3| \lesssim& (1+t)^2\|u \cdot \nabla u\|_{H^1} \|\nabla^2 u\|_{H^1}\\
\lesssim& (1+t)^2\|u\|_{W^{1,3}}\|\nabla u\|_{W^{1,6}}\|\nabla^2 u\|_{H^1}\\
\lesssim& (1+t)^2\|u\|_{H^2} \|\nabla^2 u\|_{H^1} ^2.
\end{split}
\end{equation}
Hence,
\begin{equation}\label{eqM3}
\begin{split}
\int_0^t |M_3(t')| dt' \lesssim& \sup_{0 \leq t' \leq t} \|u(t')\|_{H^2}\int_0^t (1+t')^2 \|\nabla^2 u\|_{H^1} ^2 \;dt'\\
\lesssim& \mathcal{E}_0^\frac{1}{2}(t)\mathcal{E}_2(t).
\end{split}
\end{equation}

Now, we turn to the wildest term $M_4$. Our strategy is to apply Proposition \ref{prop} and divide $M_4$ into three more achievable parts. We have $M_4 = M_{4,1}+M_{4,2}+M_{4,3}$, where,
\begin{equation}\nonumber
\begin{split}
M_{4,1} =& - 2(1+t)^2\sum_{k = 0}^1\int \nabla^k \mathbb{P} (u\cdot \nabla \mathbb{P}\nabla \cdot \tau) \nabla^k \mathbb{P} \nabla \cdot \tau dx,\\
M_{4,2} =& - 2(1+t)^2\sum_{k = 0}^1\int \nabla^k \mathbb{P} (\nabla u\cdot \nabla \tau) \nabla^k \mathbb{P} \nabla \cdot \tau dx,\\
M_{4,3} =& 2(1+t)^2\sum_{k = 0}^1\int \nabla^k \mathbb{P}\big(\nabla u\cdot \nabla \Delta^{-1} \nabla \cdot \nabla \cdot \tau \big) \nabla^k \mathbb{P} \nabla \cdot \tau dx.
\end{split}
\end{equation}
Notice the fact $\mathbb{P} \mathbb{P} = \mathbb{P}$, using integration by parts and divergence free condition,
we can compute $M_{4,1}$ like follows
\begin{equation}\nonumber
\begin{split}
M_{4,1} =& - 2(1+t)^2\sum_{k = 0}^1\int \nabla^k (u\cdot \nabla \mathbb{P}\nabla \cdot \tau) \nabla^k \mathbb{P}\mathbb{P} \nabla \cdot \tau \;dx \\
=& - 2(1+t)^2\sum_{k = 0}^1\int \nabla^k (u\cdot \nabla \mathbb{P}\nabla \cdot \tau) \nabla^k \mathbb{P} \nabla \cdot \tau \; dx\\
=& - 2(1+t)^2\int  \nabla u\cdot \nabla \mathbb{P}\nabla \cdot \tau \; \nabla \mathbb{P} \nabla \cdot \tau \;dx.
\end{split}
\end{equation}
Then, applying H\"{o}lder inequality and Sobolev imbedding theorem, we derive
\begin{equation}\nonumber
\begin{split}
|M_{4,1} | \lesssim& (1+t)^2\|\nabla u\cdot \nabla \mathbb{P}\nabla \cdot \tau\|_{L^2} \|\nabla \mathbb{P} \nabla \cdot \tau\|_{L^2}\\
\lesssim& (1+t)^2\|\nabla u\|_{L^\infty} \|\nabla \mathbb{P} \nabla \cdot \tau\|_{L^2}^2\\
\lesssim& (1+t)^2\|\nabla^2 u\|_{H^1} \|\nabla \mathbb{P} \nabla \cdot \tau\|_{L^2}^2 .
\end{split}
\end{equation}
For the estimate of $M_{4,2}$, we shall use the property that Riesz operator $\mathcal{R}_i = (-\Delta)^{-\frac{1}{2}} \nabla_i$ is $L^r$ bounded for $1< r < \infty$. Hence, it also holds for any suitable regular vector $v$, $ \|\mathbb{P}v \|_{L^r} \lesssim \|v \|_{L^r}$. Using divergence free condition, H\"{o}lder inequality and Sobolev imbedding theorem we get
\begin{equation}\nonumber
\begin{split}
|M_{4,2}| \lesssim & (1+t)^2\big(\|\nabla u \; \tau\|_{L^2} \| \nabla \mathbb{P} \nabla \cdot \tau\|_{L^2} + \|\nabla(\nabla u \cdot \nabla \tau)\|_{L^2} \|\nabla \mathbb{P} \nabla \cdot \tau \|_{L^2}\big)\\
\lesssim& (1+t)^2 \|\tau\|_{H^1}\|\nabla^2 u\|_{L^2}\|\nabla \mathbb{P} \nabla \cdot \tau\|_{L^2}\\
&+(1+t)^2 \big(\|\nabla u\|_{L^\infty}\|\nabla^2 \tau\|_{L^2}+ \|\nabla^2 u\|_{L^6}\|\nabla \tau\|_{L^3} \big)\|\nabla \mathbb{P} \nabla \cdot \tau \|_{L^2}\\
\lesssim& (1+t)^2 \|\tau\|_{H^2} \|\nabla^2 u\|_{H^1} \|\nabla \mathbb{P} \nabla \cdot \tau \|_{L^2}.
\end{split}
\end{equation}
We can use the same method in the estimate of $M_{4,3}$,
\begin{equation}\nonumber
|M_{4,3}| \lesssim (1+t)^2 \|\tau\|_{H^2} \|\nabla^2 u\|_{H^1} \|\nabla \mathbb{P} \nabla \cdot \tau \|_{L^2}.
\end{equation}
Hence,
\begin{equation}\label{eqM4}
\begin{split}
\int_0^t |M_4(t')| dt' \lesssim& \sup_{0 \leq t'\leq t}  \|\tau(t')\|_{H^2} \int_0^t (1+t')^2\|\nabla^2 u\|_{H^1} \|\nabla \mathbb{P} \nabla \cdot \tau \|_{L^2} \;dt'\\
\lesssim& \mathcal{E}_0^\frac{1}{2}(t) \mathcal{E}_2(t).
\end{split}
\end{equation}

For the last term $M_5$, using integration by parts, H\"{o}lder inequality and Sobolev imbedding theorem, we have
\begin{equation}\nonumber
\begin{split}
|M_5| \lesssim& (1+t)^2\big( \|Q\|_{L^2}\|\nabla \mathbb{P}\nabla \cdot \tau\|_{L^2}+ \|\nabla^2 Q\|_{L^2}\|\nabla \mathbb{P}\nabla \cdot \tau\|_{L^2}\big)\\
\lesssim& (1+t)^2\big(\|\nabla u\|_{L^\infty}\|\tau\|_{H^2}+ \|\nabla^2 \tau\|_{L^6} \|\nabla \tau\|_{L^3}+ \|\nabla^3 u\|_{L^2}\|\tau\|_{L^\infty}\big)\|\nabla \mathbb{P}\nabla \cdot \tau\|_{L^2}\\
\lesssim& (1+t)^2 \|\tau\|_{H^2}\|\nabla^2 u\|_{H^1}\|\nabla \mathbb{P}\nabla \cdot \tau\|_{L^2}.
\end{split}
\end{equation}
Thus, we get the following
\begin{equation}\label{eqM5}
\begin{split}
\int_0^t |M_5(t')| dt' \lesssim& \sup_{0 \leq t' \leq t} \|\tau(t')\|_{H^2}\int_0^t (1+t')^2 \|\nabla^2 u\|_{H^1}\|\nabla \mathbb{P}\nabla \cdot \tau\|_{L^2} \;dt'\\
\lesssim& \mathcal{E}_0^\frac{1}{2}(t)\mathcal{E}_2(t).
\end{split}
\end{equation}

Integrating \eqref{eq:2.5} with time and applying the estimates of $M_1 \sim M_5$, i.e., \eqref{eqM1}, \eqref{eqM2}, \eqref{eqM3}, \eqref{eqM4} and \eqref{eqM5}, it holds that
\begin{equation}\label{eqE20}
\begin{split}
\mathcal{E}_{2,1}(t) =& \sup_{0 \leq t' \leq t} (1+t')^2 \big(\|\nabla u(t')\|_{H^1}^2 + 2\| \mathbb{P}\nabla \cdot \tau(t')\|_{H^1}^2 \big) + \int_{0}^{t} (1+t')^2 \|\nabla^2 u(t')\|_{H^1}^2 dt'\\
\lesssim& \mathcal{E}_0(t) + \mathcal{E}_0^\frac{1}{2}(t)\mathcal{E}_2^\frac{1}{2}(t) + \mathcal{E}_0^\frac{3}{2}(t) + \mathcal{E}_2^\frac{3}{2}(t).
\end{split}
\end{equation}
~\\~\\~
\textbf{Second Step:}
~\\~\\~
The left work is to do the estimate of $\mathcal{E}_{2,2}(t)$.
Operating $\nabla \mathbb{P}$ on the first equation of system \eqref{eq:1.1}, we have the following equality
\begin{equation}\nonumber
\nabla u_t + \nabla \mathbb{P}(u\cdot \nabla u) - \nabla \Delta u = \nabla \mathbb{P}\nabla \cdot \tau.
\end{equation}
Then, taking $L^2$ inner product with $\nabla \mathbb{P} \nabla \cdot \tau$, we get
\begin{equation}\label{eq:2.6}
(1+t)^2\|\nabla \mathbb{P}\nabla \cdot \tau\|_{L^2}^2 = M_6 + M_7 + M_8,
\end{equation}
where,
\begin{equation}\nonumber
\begin{split}
M_6 =& -(1+t)^2 \int \nabla\Delta u \nabla\mathbb{P}\nabla \cdot \tau \;dx,\\
M_7=& (1+t)^2 \int \nabla\mathbb{P}(u\cdot \nabla u)\nabla\mathbb{P}\nabla \cdot \tau \;dx,\\
M_8=& (1+t)^2  \int \nabla u_t \nabla\mathbb{P}\nabla \cdot \tau \; dx.
\end{split}
\end{equation}

For the first term $M_6$, by H\"{o}lder inequality we directly know that
\begin{equation}\nonumber
|M_6| \lesssim (1+t)^2\|\nabla^3 u\|_{L^2} \|\nabla \mathbb{P} \nabla \cdot \tau\|_{L^2}.
\end{equation}
Thus we have the following estimate
\begin{equation}\label{eqM6}
\int_0^t |M_6(t')| dt' \lesssim \mathcal{E}_{2,1}^\frac{1}{2}(t) \mathcal{E}_{2,2}^\frac{1}{2}(t).
\end{equation}

Obviously, we can get
\begin{equation}\nonumber
\begin{split}
|M_7| \lesssim& (1+t)^2 \big(\|\nabla u \nabla u\|_{L^2} + \|u\nabla^2 u\|_{L^2}\big)\|\nabla \mathbb{P}\nabla \cdot \tau\|_{L^2}\\
\lesssim& (1+t)^2\big(\|\nabla u\|_{L^\infty}\|\nabla u\|_{L^2}+ \|u\|_{L^\infty}\|\nabla^2 u\|_{L^2} \big)\|\nabla \mathbb{P}\nabla \cdot \tau\|_{L^2}\\
\lesssim& (1+t)^2\|u\|_{H^2} \|\nabla^2 u\|_{H^1}\|\nabla \mathbb{P}\nabla \cdot \tau\|_{L^2}.
\end{split}
\end{equation}
Thus,
\begin{equation}\label{eqM7}
\begin{split}
\int_0^t |M_7(t')| dt' \lesssim& \sup_{0 \leq t' \leq t}\|u(t')\|_{H^2} \int_{0}^{t} (1+t')^2 \|\nabla^2 u\|_{H^1}\|\nabla \mathbb{P}\nabla \cdot \tau\|_{L^2} dt'\\
\lesssim& \mathcal{E}_0^\frac{1}{2}(t) \mathcal{E}_2(t).
\end{split}
\end{equation}

Now, we turn to the last term $M_8$. Using integration by parts, we first rewrite this term into the following form
\begin{equation}\nonumber
\begin{split}
M_8 =& \frac{d}{dt} (1+t)^2 \int \nabla u \; \nabla \mathbb{P}\nabla \cdot \tau \;dx
-2(1+t) \int \nabla u \;\nabla\mathbb{P}\nabla \cdot \tau \;dx\\
&-(1+t)^2  \int \nabla u \;\nabla\mathbb{P}\nabla \cdot \tau_t \;dx
\end{split}
\end{equation}
Applying \eqref{eqq} to the last part in $M_8$, it then becomes
\begin{align*}
& -(1+t)^2  \int \nabla \; u \nabla\mathbb{P}\nabla \cdot \tau_t \;dx \\
= &-(1+t)^2  \int \nabla u \; \nabla \Big[\frac{1}{2}\Delta u - \mathbb{P}\nabla \cdot (u\cdot \nabla \tau) + \mathbb{P} \nabla \cdot Q(\tau,\nabla u)\Big] \;dx.
\end{align*}
Hence, using integration by parts, H\"{o}lder inequality and Sobolev imbedding theorem, we have the following estimates
\begin{equation}\nonumber
\begin{split}
&(1+t)^2 \Big| \int \nabla u \; \nabla\mathbb{P}\nabla \cdot \tau_t \;dx \Big|\\
\lesssim& (1+t)^2\|\nabla^2 u\|_{L^2}\big(\|\nabla^2 u\|_{L^2}+\|\mathbb{P}\nabla\cdot (u\cdot \nabla \tau)\|_{L^2}+\|\nabla \cdot Q\|_{L^2}\big)\\
\lesssim& (1+t)^2\|\nabla^2 u\|_{L^2}\big(\|\nabla^2 u\|_{L^2}+\|\mathbb{P}\nabla\cdot (u\cdot \nabla \tau)\|_{L^2}+
\|\nabla u\|_{L^\infty}\|\nabla \tau\|_{L^2}+\|\nabla^2 u\|_{L^2}\|\tau\|_{L^\infty}\big)\\
\lesssim& (1+t)^2\|\nabla^2 u\|_{L^2}\big(\|\nabla^2 u\|_{L^2}+\|\mathbb{P}\nabla\cdot (u\cdot \nabla \tau)\|_{L^2}+
\|\nabla^2 u\|_{H^1}\| \tau\|_{H^2}\big).
\end{split}
\end{equation}
Using the same strategy in the estimate of $M_4$, we apply Proposition \ref{prop} to $\|\mathbb{P}\nabla\cdot (u\cdot \nabla \tau)\|_{L^2}$ and get the following
\begin{equation}\nonumber
\begin{split}
\|\mathbb{P}\nabla\cdot (u\cdot \nabla \tau)\|_{L^2} \lesssim &
\|u\cdot \nabla \mathbb{P}\nabla \cdot \tau\|_{L^2}+\|\nabla u\cdot \nabla \tau\|_{L^2}
+\|\nabla u\cdot \nabla \Delta^{-1} \nabla \cdot \nabla \cdot \tau\|_{L^2}\\
\lesssim& \|u\|_{L^\infty}\|\nabla \mathbb{P}\nabla \cdot \tau\|_{L^2}+\|\nabla u\|_{L^\infty} \|\nabla \tau\|_{L^2}\\
\lesssim& \|u\|_{H^2}\|\nabla \mathbb{P}\nabla \cdot \tau\|_{L^2}+\|\nabla \tau\|_{L^2}\|\nabla^2 u\|_{H^1}.
\end{split}
\end{equation}
Thus,
\begin{equation}\nonumber
\begin{split}
&(1+t)^2 \Big| \int \nabla u \; \nabla \mathbb{P}\nabla \cdot \tau_t \;dx \Big|\\
\lesssim& (1+t)^2\|\nabla^2 u\|_{L^2}\big(\|\nabla^2 u\|_{L^2}+\|u\|_{H^2}\|\nabla \mathbb{P}\nabla \cdot \tau\|_{L^2}+
\|\nabla^2 u\|_{H^1}\| \tau\|_{H^2}\big).
\end{split}
\end{equation}
And then we can get the estimate of $M_8$
\begin{equation}\label{eqM8}
\begin{split}
\Big|\int_0^t (1+t')^2 M_8(t') \;dt'\Big| \lesssim & \sup_{0\leq t'\leq t} (1+t')^2 \|\nabla u\|_{L^2}\|\nabla \mathbb{P}\nabla \cdot \tau\|_{L^2}\\
&+ \int_0^t (1+t') \|\nabla u\|_{L^2} \|\nabla \mathbb{P}\nabla \cdot \tau\|_{L^2} \;dt'\\
&+\int_0^t (1+t')^2 \|\nabla^2 u\|_{L^2}^2\; dt'\\
 &+ \sup_{0 \leq t'\leq t} \|u(t')\|_{H^2}\int_0^{t} (1+t')^2 \|\nabla^2 u\|_{L^2}\|\nabla \mathbb{P}\nabla \cdot \tau\|_{L^2}\;dt'\\
 &+\sup_{0 \leq t'\leq t} \|\tau(t')\|_{H^2}\int_0^{t} (1+t')^2 \|\nabla^2 u\|_{H^1}^2 \;dt'\\
 \lesssim& \mathcal{E}_{2,1}(t) + \mathcal{E}_0^\frac{1}{2}(t)\mathcal{E}_2^\frac{1}{2} + \mathcal{E}_0^\frac{1}{2}\mathcal{E}_2(t).
\end{split}
\end{equation}

Integrating \eqref{eq:2.6} with time, applying the estimates of $M_6 \sim M_8$, i.e., \eqref{eqM6}, \eqref{eqM7}, \eqref{eqM8} and Young inequality, we finally get
\begin{equation}\label{eqE21}
\mathcal{E}_{2,2}(t)
\lesssim  \mathcal{E}_{2,1}(t) + \mathcal{E}_0(t) + \mathcal{E}_0^\frac{1}{2}(t) \mathcal{E}_2^\frac{1}{2}(t) + \mathcal{E}_0^\frac{1}{2}\mathcal{E}_2(t).
\end{equation}
We now combine the estimates of $\mathcal{E}_{2,1}(t)$ and $\mathcal{E}_{2,2}(t)$ together to finish this lemma's proof.
Multiplying \eqref{eqE20} by a suitable large number and plus \eqref{eqE21}, we get
\begin{equation}\nonumber
\mathcal{E}_2(t) \lesssim \mathcal{E}_0(t) + \mathcal{E}_0^\frac{1}{2}(t) \mathcal{E}_2^\frac{1}{2}(t) + \mathcal{E}_0^\frac{3}{2}(t) + \mathcal{E}_2^\frac{3}{2}(t).
\end{equation}
We complete the proof of this lemma by applying Young inequality on the above inequality.
\end{proof}

\section{Proof of the Theorem \ref{thm}}
In this section, we will combine the above \textit{a priori} estimates of $\mathcal{E}_0$ and $\mathcal{E}_2$ together and then give the proof of Theorem \ref{thm}.
First, we define the total energy $\mathcal{E}(t) = \mathcal{E}_0(t) + \mathcal{E}_2(t)$. Notice the estimates in Lemma \ref{lem1} and Lemma \ref{lem2}, we have
\begin{equation}\label{eq:4.1}
\mathcal{E}(t) \leq C_1 \mathcal{E}_0(0) + C_1\mathcal{E}^\frac{3}{2}(t),
\end{equation}
 for some positive constant $C_1$.  Under the setting of initial data in Theorem \ref{thm}, there exists a small enough number $\varepsilon$ such that $\mathcal{E}(0), C_1\mathcal{E}_0(0) \leq \varepsilon$. Due to local existence theory which can be achieved through standard energy method (see \cite{CM} for instance), there exists a positive time $T$ such that
\begin{equation}\label{eqEtotal2}
\mathcal{ E}(t) \leq 2 \varepsilon , \quad  \forall \; t \in [0, T].
\end{equation}
Let $T^{*}$ be the largest possible time of $T$ for what \eqref{eqEtotal2} holds. Now, we only need to show $T^{*} = \infty$. By the estimate of total energy \eqref{eq:4.1}, we can use
 a standard continuation argument to show  $T^{*} = \infty$ provided that $\varepsilon$ is small enough.  We omit the details here. Hence, we finish the proof of Theorem \ref{thm}.

\section*{Acknowledgement}

The author sincerely appreciates the helpful suggestion from Professor Yi Zhou and Professor Ting Zhang.

\end{document}